\newtheorem{mylem}{Lemma}
\newtheorem{mythm}{Theorem}
\newtheorem{myas}{Assumption}
\newtheorem{myrem}{Remark}
\newcommand{\rfig}[1]{Fig.\,\,\ref{#1}} 
\newcommand{\req}[1]{\eqref{#1}}
\newcommand{\rlem}[1]{Lemma\,\ref{#1}}
\newcommand{\ras}[1]{Assumption\,\ref{#1}}
\newcommand{\inte}{\mathbb{N}_{\geq 0}}
\begin{document}
\title{\LARGE \textbf{Self-triggered Model Predictive Control for Continuous-Time Systems: A Multiple Discretizations Approach}}
\author{Kazumune Hashimoto, Shuichi Adachi, and Dimos V. Dimarogonas
\thanks{Kazumune Hashimoto and Shuichi Adachi are with Department of Applied Physics and Physico-Informatics, Keio University, Yokohama, Japan.}
\thanks{Dimos V. Dimarogonas is with the ACCESS Linnaeus Center,
School of Electrical Engineering, KTH Royal Institute of Technology,  Stockholm, Sweden. His work was supported by the Swedish Research Council (VR).
}}
\maketitle
\begin{abstract}
In this paper, we propose a new self-triggered formulation of Model Predictive Control for continuous-time linear networked control systems. Our control approach, which aims at reducing the number of transmitting control samples to the plant, is derived by parallelly solving optimal control problems with different sampling time intervals. The controller then picks up one sampling pattern as a transmission decision, such that a reduction of communication load and the stability will be obtained. 
The proposed strategy is illustrated through comparative simulation examples. 
\end{abstract}
\section{Introduction}
Event-triggered and self-triggered control have been active areas of research in the community of Networked Control Systems (NCSs), due to their potential advantages over the typical time-triggered controllers \cite{heemels2012a,dimos2010a,heemels2013a,heemels2013b,lemmon2009a}. 
In contrast to the time-triggered case where the control signals are executed periodically, event-triggered and self-triggered strategies require the executions based on the violation of prescribed control performances, such as Input-to-State Stability (ISS) \cite{dimos2010a}, LMI based stability conditions \cite{heemels2013b}, and ${\cal L}_2$ gain stability \cite{lemmon2009a}. 
The main difference between these two strategies is that in the event-triggered case an intelligent sensor is required to determine the execution by continuously monitoring the state, while in the self-triggered case the next execution is pre-determined without needing to measure the state continuously.

In another line of research, Model Predictive Control (MPC) has been one of the most successful control strategies applied in a wide variety of applications, such as process industries, robotics, autonomous systems, and moreover, recent research also includes NCSs \cite{findeisen2009a}. 
The basic idea of MPC is to obtain the current control action by solving the Optimal Control Problem (OCP) online, based on the knowledge about current state and predictive behaviors of the dynamics. 

The application of the event-triggered or self-triggered framework to MPC is in particular of importance as the possible way to alleviate communication resources for NCSs. Combining these strategies has been a relatively recent research topic, where most results have been proposed for discrete-time systems see, e.g.,  \cite{evmpc_linear1,evmpc_linear2,evmpc_linear5,evmpc_nonlinear3,evmpc_nonlinear4,evmpc_linear4}, while some results have been proposed for continuous-time systems, e.g.,  \cite{evmpc_nonlinear2,evmpc_nonlinear1,hashimoto2015a,hashimoto2017a}. 
In this paper, we propose a new self-triggered MPC for continuous-time linear systems, as an alternative approach to the preliminary works in the literature. 
In \cite{hashimoto2017a}, the self-triggered condition was derived for continuous-time systems, based on the condition that the optimal cost as a Lyapunov function is guaranteed to decrease. 
Since this result considered input-affine systems, it is applicable to the linear case. 
However, the self-triggered strategy may lead to a conservative result in the following sense; the obtained self-triggered condition includes several parameters, such as Lipschitz constant of stage and terminal costs, which are characterized by the maximum size of state regions. Depending on the problem formulation, therefore, these parameters are sometimes over-approximated and the corresponding self-triggered condition may then become conservative. A related work is also reported in \cite{evmpc_nonlinear2}, where the authors proposed an event-triggered scheme for continuous-time systems. In their approach, the OCP is solved only when the error between the actual and predictive state exceeds a certain thereshold. 

The self-triggered strategy proposed in this paper takes a different problem formulation from previous works in the literature. The basic idea is to parallelly solve OCPs, which provides different transmission time intervals under a piece-wise constant control policy. Based on the different solutions, the controller then selects one solution providing the largest transmission time interval while at the same time guaranteeing the control performance. 
The new formulation of the proposed self-triggered strategy leads to the following main contributions of this paper with respect to the earlier approaches: 
\begin{enumerate}
\item The proposed self-triggered strategy does not include parameters (such as Lipschitz constant parameters) that may be the potential source of conservativeness. The simulation result also illustrates that less conservative results can be obtained than the previous framework. 
\item The optimal costs can be compared under various transmission time intervals. This allows us to obtain the desired control performance by evaluating how much this becomes better or worse according to different values of transmission time intervals. 
\end{enumerate}
This paper is organized as follows. In Section II, the OCP is formulated. In Section III, the self-triggered strategy is provided. In Section IV, the feasibility of our proposed algorithm and the stability are investigated. 
In Section V, the proposed scheme is validated through a numerical example. We finally conclude in Section VI. 

The notations used in the sequel are as follows:
 $\mathbb{R}$,  $\mathbb{R}_{\geq 0}$,  $\mathbb{N}_{\geq 0}$, $\mathbb{N}_{\geq 1}$ are the real, non-negative real, non-negative integers and positive integers, respectively. For a matrix $Q$, we use $Q \succ 0$ to denote that $Q$ is positive definite. Denote $||x||$ as the Euclidean norm of vector $x$. 

\section{Problem Formulation}
\subsection{Dynamics and Cost}
We consider a networked control system depicted in \rfig{network}. 
The dynamics of the plant are assumed to be given by the following linear continuous-time invariant system:
\begin{equation}\label{sys1}
\dot{x} (t) = A x(t) + B u(t)
\end{equation}
where ${x}(t) \in {\mathbb{R}} ^{n}$ is the state and $u(t) \in {\mathbb{R}} ^{m}$ is the control variable. We assume that the pair $(A,B)$ is stabilizable, and $u(t)$ is subject to the constraint $u(t) \in {\cal U}$, where  ${\cal U} \subset {\mathbb{R}} ^{m}$ is a compact subset containing the origin in the interior. The control objective for the MPC is to drive the state to the origin, i.e., $x(t) \rightarrow 0$, as $t\rightarrow \infty$. 

\begin{figure}[tbp]
  \begin{center}
   \includegraphics[width=7.5cm]{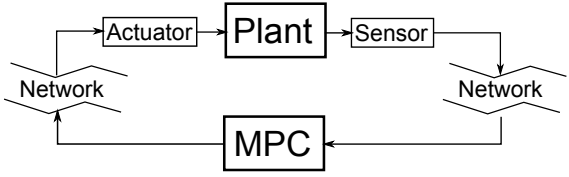}
   \caption{Networked Control System}
   \label{network}
  \end{center}
 \end{figure}
Let $t_k$, $k\in \inte $ be the time instants when OCPs are solved; at $t_k$, the controller solves an OCP based on the knowledge about the state $x(t_k)$, and the dynamics given by \req{sys1}. 
In this paper, we consider the following cost function to be minimized:
\begin{equation}\label{costfunc}
\begin{array}{lll}
J(x(t_k), u(\cdot )) \\
\ \ \ \ \ \ ={\displaystyle \int}^{t_k+T_p} _{t_k}\!\!\! x^{\mathsf{T}} (\xi )Q x(\xi )+ u^{\mathsf{T}} (\xi ) R u (\xi ) {\rm d}\xi \\
\ \ \ \ \ \ \ \ \ \ \ \ \ \ \ \ \ + x^{\mathsf{T}} (t_k + T_p )P_f x(t_k + T_p)
\end{array}
\end{equation}
where $Q\succ 0 $, $R\succ 0$ are the matrices for the stage cost, $P^\mathsf{T} _f = P_f \succ 0$ is the matrix for the terminal cost, and $T_p$ is the prediction horizon. More detailed characterization of $P_f$ will be discussed in later sections. 

In order to derive a self-triggered strategy, we first consider that the control input $u(\xi )$, $\xi \in[t_k, t_k +T_p]$ is constrained to be piece-wise constant with different sampling intervals, e.g., $\delta_1, \delta_2, \cdots, \delta_N$, as shown in \rfig{piecewise}. 
\begin{figure}[tbp]
  \begin{center}
   \includegraphics[width=8.0cm]{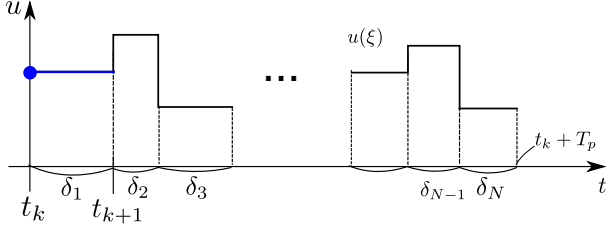}
   \caption{The piece-wise constant control policy considered in this paper. The controller solves an OCP under above piece-wise constant control policy. Once the OCP is solved, the controller transmits the optimal control sample at $t_k$ to the plant (Blue circle). The plant then applies it as sample-and-hold fashion until the next transmission time $t_{k+1}=t_k + \delta_1$ (Blue line), and sends back the new state measurement to the controller. 
   }
   \label{piecewise}
  \end{center}
 \end{figure}
This discretizing scheme is motivated as follows: 
The solution of the OCP by minimizing the cost \req{costfunc} is in general given by a continuous trajectory of the optimal control input, say $u^*(\xi)$, for all $\xi\in [t_k , t_k +T_p]$. 
If the optimal control input \textit{could} be applied until $t_{k+1}$, i.e., $u^* (\xi)$, $\xi \in [t_k , t_{k+1})$, then we could utilize the classic MPC result to guarantee the asymptotic stability of the origin, see \cite{Chen1998a}. However, applying the continuous trajectory of the control input is not suited for practical NCSs applications in terms of the two aspects. 
Firstly, transmitting continuous control trajectory over the network requires an infinite-transmission bandwidth, which is un-realizable. 
Secondly, implementing the {exact} continuous control input is difficult for embedded control system architectures, since they only deal with samples as a discrete time domain, resulting in applying the control input eventually as a sampled-and-hold implementation at a high frequency. As the actual control trajectory for this case possibly differs from the optimal control trajectory, it fails to guarantee the asymptotic stability of the origin.  

The OCP under the piece-wise constant control policy considered in this paper thus provides the optimal control sequence at discrete sampling intervals, i.e., $\{u^* (t_k), u^* (t_k + \delta_1), \cdots, u^* (t_k + \sum^{N} _{j=1} \delta_j ) \}$ rather than the whole control trajectory $u^* (\xi), \xi \in [t_k , t_k + T_p]$. As the procedure of transmitting control samples, we consider the following steps; (i) the controller transmits the optimal control sample $u^* (t_k)$ to the plant; (ii) the plant then applies $u^* (t_k)$ at constant until $t_{k+1} = t_k + \delta_1$; (iii) the plant sends back a new state measurement $x(t_{k+1})$ to the controller to solve the next OCP at $t_{k+1}$. 
Under this procedure, the transmission time interval is then given by $t_{k+1} - t_k = \delta _1$. 

Applying the above transmission procedure not only allows the controller to transmit control command as a sample, but also allows us to formulate the OCP as the discrete time domain. The main difference of the problem formulation with respect to the periodic (or event-triggered) MPC for general discrete time systems is, however, that we are now free to select the sampling time intervals $\delta_1,\cdots, \delta_N$ in an appropriate way.
Although there is a flexibility to select $\delta_1,\cdots, \delta_N$, these intervals must be carefully determined such that:
\begin{enumerate}
\item The asymptotic stability of the origin is guaranteed under MPC with the piece-wise constant control policy. 
\item The reduction of communication load is achieved through the self-triggered formulation. 
\end{enumerate}
In the next subsection, we provide one possible way to 
determine the sampling time intervals $\delta_1,\cdots, \delta_N$, such that the above problems can be tackled. 

\subsection{Determining sampling time intervals}
Under the piece-wise constant control policy outlined in \rfig{piecewise}, the sampling time intervals are determined in this subsection. 
By making use of the flexibility of selecting the sampling time intervals, consider at first that we have \textit{multiple patterns} of sampling time intervals, i.e., we have $M$ ($M \in \mathbb{N}_{\geq 1}$) different sampling patterns in total, where each $i$-th ($i\in \{ 1, 2, \cdots, M \})$ sampling pattern has $N_i$ sampling intervals, $\delta^{(i)} _1 , \delta^{(i)} _2, \cdots, \delta^{(i)} _{N_i}$. 
More specifically, in this paper we consider the sampling patterns as shown in \rfig{controlpatterns}. 
Stated formally, for given $M, N_p \in \mathbb{N}_{\geq 1}$, where $M<N_p$ and $N_p$ represents the maximum number of sampling intervals among all patterns, and $\delta = T_p / N_p$, the sampling time intervals for the $i$-th ($i\in \{ 1,2, \cdots, M \}$) pattern are given by
\begin{equation}\label{sampling_interval}
\delta^{(i)} _1 = i \delta, \ \ \delta^{(i)} _j = \delta\ \ (j = 2, 3, \cdots, N_i ),
 \end{equation} 
with $N_i = N_p -i+1$.
That is, the 1$^{\rm st}$ pattern has the same interval: $\delta^{(1)} _1 = \cdots = \delta^{(1)} _{N_p} = \delta$. The 2$^{\rm nd}$ pattern is the same as the 1$^{\rm st}$ pattern only except the first sampling interval: $\delta^{(2)}_1 = 2 \delta$, $\delta^{(2)} _2 = \cdots = \delta^{(2)} _{N_p -1} = \delta$. Similarly, for the general $i$-th pattern we have $\delta^{(i)} _1 = i \delta$, and $\delta$ for the remaining intervals. 
The controller solves the corresponding OCPs under all sampling patterns above, and then selects one sampling pattern according to the self-triggered strategy proposed in the next section. 

The main motivation of using the sampling patterns shown in \rfig{controlpatterns}, is that it allows to evaluate the trade-off between the transmission interval and the control performance quantitatively. 
According to the transmission procedure given in the previous subsection, the transmission time interval is given by $\delta^{(i)} _1=i\delta$. Thus, using larger patterns leads to longer transmission intervals. From the self-triggered point of view, it is desirable to have larger patterns. However, as we will see in the analysis that follows, the control performance instead becomes worse; this will be proved by the fact that the optimal cost becomes larger as larger patterns are selected. 
In later sections, we will provide a framework of selecting one sampling pattern, such that the trade-off between the transmission time interval and the control performance can be taken into account.
\begin{myrem}[On the selection of $M$]
\normalfont
If the number of patterns $M$ is chosen larger, then we may increase the possibility to have longer transmission time intervals. However, since this leads to the requirement of solving larger number of OCPs at the same time, it will also induce time delays for obtaining optimal solutions under all sampling patterns. 
We will note in later remarks that existing delay compensation strategies may serve as a solution to this problem.
\end{myrem}
\begin{figure}[tbp]
  \begin{center}
   \includegraphics[width=8.7cm]{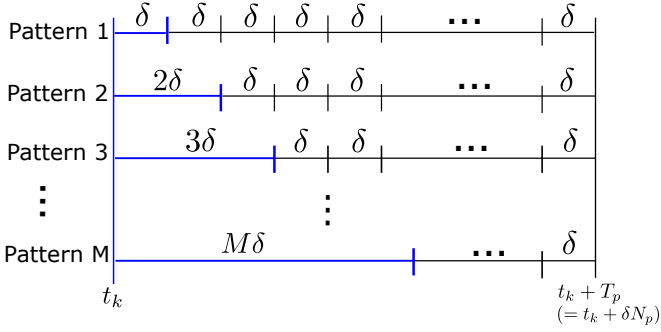}
   \caption{Sampling patterns considered in this paper. Blue lines represent the transmission time intervals. 
   }
   \label{controlpatterns}
  \end{center}
 \end{figure}

\subsection{Optimal Control Problem}
In this subsection the OCP under each sampling pattern is formulated. 
For the $i$-th sampling pattern, we denote 
\begin{equation}
\begin{aligned}
{\bf u}_i (t_k) = \{ u_i (t_k), u_i (t_k + i \delta), u_i (t_k + (i+1) \delta) , \cdots \\ 
\cdots , u_i (t_k +(N_p-1)\delta) \}
\end{aligned}
\end{equation}
as the control input sequence to be applied. 
Note that $u_i (t_k + i \delta)$ is used after $u_i (t_k)$, as $u_i (t_k)$ is applied for the time interval $i\delta$. 
The cost given by \req{costfunc} under the $i$-th sampling pattern can be re-written as 
\begin{equation*}
\begin{array}{lll}
J_i (x(t_k), {\bf u}_i (t_k) ) \\
\ \ \ \ \ = {\displaystyle \int}^{i \delta }_{0}\!\! \left \{ x^\mathsf{T}(t_k + \xi ) Q x(t_k+ \xi ) + u^\mathsf{T} _i (t_k) R u_i (t_k) \right \} {\rm d}\xi \\
\ \ \ \ \ \ \ +  {\displaystyle \sum^{N_p -1} _{n =i }} {\displaystyle \int}^{\delta }_{0}\!\! \left \{ x^\mathsf{T}(t_k+n \delta + \xi ) Q x(t_k+n\delta + \xi ) \right. \\ 
\ \ \ \ \ \ \ \ \ \ \ \ \ \ \ \ \ \ \ \ \ \ \ \ \ \ \ \ \left. + u^\mathsf{T} _i (t_k+ n \delta) R u_i (t_k+n\delta) \right \} {\rm d}\xi \\ 
\ \ \ \ \ \ \ + x^\mathsf{T} (t_k +N_p\delta ) P_f x(t_k + N_p\delta ),
\end{array}
\end{equation*}
where the total cost is separated by each component of the control sequence ${\bf u}_i (t_k)$. 
Here we denoted $J_i$ instead of $J$ to emphasize that the piece-wise constant control policy under the $i$-th sampling pattern is used. By computing each integral in the above equation, 
the total cost for the $i$-th sampling pattern can be translated into a summation of costs: 
\begin{equation*}
\begin{aligned}
J_i (x(t_k), {\bf u}_i (t_k) ) & = F (x(t_k), u_i (t_k), i \delta) \\
& + {\displaystyle \sum^{N_p -1} _{n=i }} \left \{ F ( x(t_k+ n \delta), u_i(t_k+ n \delta), \delta) \right \}  \\
& + x^\mathsf{T}(t_k + N_p \delta) P_f x(t_k + N_p \delta),
\end{aligned}
\end{equation*}
where $F (x(t), u(t), i\delta)$ denotes a new stage cost given by
\begin{eqnarray}\label{costF}
F(x(t),u(t),i \delta) \!\!\! &\!\!\!=&\!\!\!\!\!\!\! \int^{i\delta} _{0}\!\!\!\! x^\mathsf{T} (t + \xi ) Q x(t+ \xi )\! +\! u^\mathsf{T} (t) R u (t) {\rm d}\xi \notag \\
                   &=& \tilde{x}^\mathsf{T}(t) \Gamma (i \delta) \tilde{x}(t),
\end{eqnarray}
where $\tilde{x}(t) = [x^\mathsf{T}(t) \ \ u^\mathsf{T}(t) ]^\mathsf{T}$ and 
\begin{equation*}
\Gamma (i \delta) = \left [
\begin{array}{cc}
{\int}^{i \delta} _{0} A^\mathsf{T}_s Q A_s {\rm d}s &  {\int}^{i \delta} _{0} B^\mathsf{T}_s Q A_s {\rm d}s \\
{\int}^{i \delta} _{0} A^\mathsf{T}_s Q B_s {\rm d}s & {\int}^{i \delta} _{0} (B^\mathsf{T}_s Q B_s + R) {\rm d}s
\end{array}
\right ]
\end{equation*} 
with $A_s = e^{A s}$, $B_s = \int^{s} _{0} e^{A \tau }{\rm d}\tau B$. The OCP for the $i$-th sampling pattern is now formulated as follows.　\\

\noindent
\textit{(Problem 1)} : Given $x(t_k)$, the OCP at $t_k$ for the $i$-th pattern is to minimize $J_i (x(t_k), {\bf u}_i (t_k))$, subject to 
\begin{numcases}
   { }
      {x}(t_k +i \delta ) = A_{i \delta} x(t_k) + B_{i \delta} u_i (t_k) \label{constraint1} \\
   {x}(t_k +(n+1)\delta)\ \  (n=i , \cdots, N_p -1) \notag \\ 
   \ \ \ \ \ \ = A_{\delta} x(t_k +n\delta) + B_{\delta} u_i (t_k + n\delta ) \label{constraint2} \\
    u_i (t_k + n\delta ) \in {\cal U}, \ \ n=0 , i, i+1, \cdots, N_p -1 \label{constraint3} \\
    x (t_k + N_p \delta) \in {\Phi} \label{constraint4}
\end{numcases}
The constraints \req{constraint1} and \req{constraint2} represent the dynamics by applying the control sequence ${\bf u}_i (t_k)$, and \req{constraint3} represents the constraint for the control input. The last constraint \req{constraint4} represents the terminal state penalty, where ${\Phi} = \{ x  \in \mathbb{R}^{n} :  x^\mathsf{T} P_f x\leq \varepsilon \}$ for a given $\varepsilon>0$. 
We let 
\begin{equation*}
\begin{array}{lll}
{\bf u}^* _{i} (t_k) = \{ {u}^* _i(t_k), {u}^* _i(t_k+ i\delta) , \cdots, {u}^* _i (t_k+(N_p -1)\delta) \} \\
{\bf x}^* _{i} (t_k) = \{ {x}^* _i(t_k), {x}^* _i(t_k+ i\delta) , \cdots, {x}^* _i (t_k+N_p \delta) \}
\end{array}
\end{equation*}
be the optimal control and the corresponding state sequence with $x^* _i (t_k) = x(t_k)$, obtained by solving Problem 1. 
We further denote $J^* _i (x(t_k) ) = J_i (x(t_k), {\bf u}^* _{i} (t_k))$ as the optimal cost. 

Similarly to the classic strategy of MPC, 
we consider that the matrix $P_f$ and $\varepsilon$ are chosen such that the following condition on the terminal region $\Phi$ is satisfied: 
\begin{myas}\label{terminal}
There exists a local state feed-back controller $\kappa(x) =K x \in {\cal U}$, satisfying 
\begin{equation}\label{localcontroller}
\begin{aligned}
x(t_{k}+\delta)^\mathsf{T} P_f x(t_{k}+\delta) &- x^\mathsf{T}(t_k) P_f x(t_k) \\ 
& \leq - F (x(t_k), K x(t_k), \delta )
\end{aligned}
\end{equation}
for all $x(t_k) \in \Phi$, where $x(t_{k}+\delta) = (A_\delta +B_\delta K) x(t_k)$. 
\end{myas}
\ras{terminal} will be used to guarantee that the optimal cost decreases along the time by an appropriate selection of the sampling pattern. 
Since the system \req{sys1} is assumed to be stabilizable, the local controller $\kappa(x)$ and $\Phi$ satisfying \req{localcontroller}, can be found off-line  by following the procedure presented in \cite{Chen1998a}. 
To arrive at the self-triggered strategy, we will in the following derive some useful properties for the optimal costs obtained under different sampling patterns. These properties are key ingredients to quantify the control performances for the self-triggered strategy, as well as for the asymptotic stability provided in later sections. 
\begin{mylem}\label{lem1}
Suppose that Problem 1 admits a solution at $t_k$ under each sampling pattern $i\in \{ 1, 2, \cdots, M \}$, which provides the optimal costs $J^* _i (x(t_k))$ for all $i\in \{1,\cdots,M\}$. Then we have
\begin{equation}\label{optimalcostincrease}
J^* _1 (x(t_k)) \leq J^* _2 (x(t_k)) \leq J^* _3 (x(t_k)) \cdots \leq J^* _M (x(t_k))
\end{equation}
\end{mylem}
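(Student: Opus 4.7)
The plan is to establish the chain of inequalities by a pairwise argument: for each $i \in \{1,\ldots,M-1\}$, I would show $J^*_i(x(t_k)) \le J^*_{i+1}(x(t_k))$ by constructing, from any feasible control sequence for pattern $i+1$, a feasible control sequence for pattern $i$ that yields \emph{exactly the same} closed-loop trajectory and cost. The chain then follows by transitivity.

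The key observation is structural: pattern $i+1$ coincides with pattern $i$ in all sampling intervals except the first. In pattern $i+1$ the control is held constant over $[t_k, t_k+(i+1)\delta]$, whereas in pattern $i$ the control is held constant over $[t_k, t_k+i\delta]$ and then can (but need not) change on the subsequent interval $[t_k+i\delta, t_k+(i+1)\delta]$. Hence pattern $i+1$ is obtainable from pattern $i$ by imposing an additional equality constraint on two consecutive control samples. Concretely, given the optimal sequence
\begin{equation*}
\mathbf{u}^*_{i+1}(t_k) = \{u^*_{i+1}(t_k),\, u^*_{i+1}(t_k+(i+1)\delta),\, \ldots,\, u^*_{i+1}(t_k+(N_p-1)\delta)\},
\end{equation*}
I would define a candidate sequence $\tilde{\mathbf{u}}_i(t_k)$ for pattern $i$ by setting $\tilde{u}_i(t_k) = u^*_{i+1}(t_k)$, $\tilde{u}_i(t_k + i\delta) = u^*_{i+1}(t_k)$, and $\tilde{u}_i(t_k + n\delta) = u^*_{i+1}(t_k + n\delta)$ for $n = i+1,\ldots,N_p-1$.

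Next I would verify feasibility of $\tilde{\mathbf{u}}_i(t_k)$ in Problem~1 under pattern $i$: the dynamical constraints \eqref{constraint1}--\eqref{constraint2} reproduce exactly the same state trajectory as under $\mathbf{u}^*_{i+1}(t_k)$ (since propagating with the same constant input over $i\delta$ then over $\delta$ gives the same evolution as propagating with it over $(i+1)\delta$, by the semigroup property $A_{(i+1)\delta} = A_\delta A_{i\delta}$ and the corresponding identity for $B$); the input constraint \eqref{constraint3} is inherited because the values of $\tilde{u}_i$ are all taken from $\mathbf{u}^*_{i+1}$, which lies in $\mathcal{U}$; and the terminal constraint \eqref{constraint4} holds because the terminal state is identical.

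Finally I would match the costs. Since the state trajectories coincide and the control values coincide pointwise in time, the running cost integrals agree term by term; in particular the single stage cost $F(x(t_k), u^*_{i+1}(t_k), (i+1)\delta)$ of pattern $i+1$ equals $F(x(t_k), u^*_{i+1}(t_k), i\delta) + F(x(t_k+i\delta), u^*_{i+1}(t_k), \delta)$ of the candidate under pattern $i$, by additivity of the integral in \eqref{costF}. Therefore $J_i(x(t_k), \tilde{\mathbf{u}}_i(t_k)) = J^*_{i+1}(x(t_k))$, and by optimality $J^*_i(x(t_k)) \le J_i(x(t_k), \tilde{\mathbf{u}}_i(t_k)) = J^*_{i+1}(x(t_k))$, which completes the inductive step. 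The only mild subtlety, and the step that most deserves care in writing, is checking the cost-matching identity cleanly via the additivity of $F$ over concatenated intervals; everything else is a direct bookkeeping argument.
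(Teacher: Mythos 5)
Your proposal is correct and follows essentially the same route as the paper: the paper likewise takes the optimal sequence for the larger pattern, splits its first (longer) hold interval into two consecutive samples with the same control value to obtain a feasible candidate for the smaller pattern with identical trajectory and cost, and concludes by optimality and transitivity. Your explicit verification of the semigroup identity and the additivity of the stage cost over concatenated intervals is slightly more detailed than the paper's, but it is the same argument.
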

\begin{proof}
Let ${\bf u}^* _{i} (t_k)$, ${\bf x}^* _{i} (t_k)$, $i\in \{1,2, \cdots, M\}$ be the optimal control and the corresponding state sequence obtained by Problem 1 under the $i$-th sampling pattern. The illustration of the corresponding optimal piece-wise constant control policy is depicted in \rfig{piecewisepatterni}. 
\begin{figure}[tbp]
  \begin{center}
   \includegraphics[width=8.7cm]{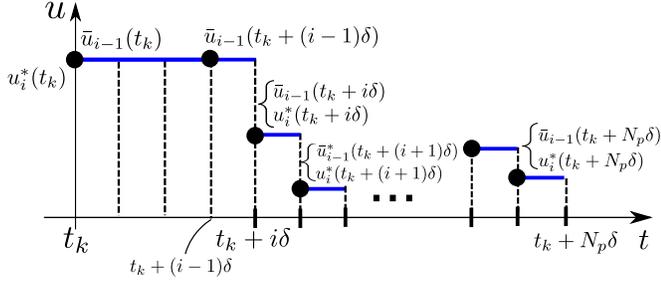}
   \caption{Optimal piece-wise constant control policy for the $i$-th sampling pattern (blue line). The control policy in the figure also provides a feasible solution to Problem 1 under the ($i-1$)-th sampling pattern, since $u^* _i (t_k)$ is applied for $t\in [t_k, t_k + (i-1)\delta ) \in [t_k, t_k+i\delta)$. The black circles represent the admissible control sequence for the ($i-1$)-th pattern ${\bf \bar{u}}_{i-1} (t_k)$.}
   \label{piecewisepatterni}
  \end{center}
 \end{figure}
Under the $i$-th ($i\geq 2$) sampling pattern, $u^* _i (t_k)$ is applied at constant for all $t\in [t_k, t_k + i\delta )$ as shown in \rfig{piecewisepatterni}. 
The control policy for the $i$-th ($i \geq 2$) sampling pattern is thus {admissible} also for the ($i-1$)-th sampling pattern, 
as $u^* _i (t_k)$ is applied for $t\in [t_k, t_k + (i-1)\delta ) \in [t_k, t_k+i\delta)$.
More specifically, let 
\begin{equation*}
\begin{aligned}
{\bf \bar{u}} _{i-1} (t_k) = \{ \bar{u} _{i-1} (t_k), \bar{u} _{i-1} (t_k+(i-1) \delta) \cdots \ \ \\
\ \ \ \cdots, \bar{u} _{i-1} (t_k+(N_p -1)\delta) \},
\end{aligned}
\end{equation*}
where $\bar{u}_{i-1} (t_k) = u^* _i (t_k)$, $\bar{u}_{i-1} (t_k+(i-1) \delta) = u^* _i (t_k)$ and 
\begin{equation*}
\bar{u} _{i-1} (t_k+ j \delta) = u^* _{i} (t_k+ j \delta ), \ \ j= i, \cdots, N_p -1, 
\end{equation*}
and ${\bf \bar{x}} _{i-1} (t_k) = \{ \bar{x} _{i-1} (t_k), \bar{x} _{i-1} (t_k+(i-1) \delta) \cdots, \bar{x} _{i-1} (t_k+N_p\delta) \}$ be the corresponding state sequence with $\bar{x} _{i-1} (t_k) = x(t_k)$ (see the illustration of ${\bf \bar{u}} _{i-1}$ in \rfig{piecewisepatterni}). 
Then, ${\bf \bar{u}} _{i-1} (t_k)$ provides a feasible solution to Problem 1 under the $(i-1)$-th pattern, satisfying all constraints \req{constraint1}, \req{constraint2}, \req{constraint3} and \req{constraint4}. 
The last constraint \req{constraint4} is obtained by the fact that $\bar{x} _{i-1} (t_k+ N_p \delta) = {x}^* _{i} (t_k+ N_p\delta) \in \Phi$.
Since ${\bf \bar{u}} _{i-1}$ is a feasible controller for the $(i-1)$-th pattern, we obtain
\begin{equation}
\begin{array}{lll}
J^* _{i-1} (x(t_k)) &\leq & J _{i-1} (x(t_k), {\bf \bar{u}}_{i-1} (t_k) ) \\
                        & = & J _{i} (x(t_k), {\bf u}^* _{i} (t_k) ) \\
                        & = & J^* _{i} (x(t_k)),
\end{array}
\end{equation}
and the above inequality holds for all $i\in \{2, 3, \cdots, M\}$.
The proof is thus complete. 
\end{proof}
\rlem{lem1} states that the 1$^{\rm st}$ pattern provides the best control performance in the sense that the optimal cost takes the minimum value among all patterns, and moreover, the control performance becomes worse as larger patterns are selected.
The next lemma states that the optimal cost is guaranteed to decrease whenever the 1$^{\rm st}$ pattern is used: 
\begin{mylem}
\label{lem2}
Suppose that the $i$-th pattern was used at $t_{k-1}$ and the next time to solve the OCP is given by $t_k =t_{k-1} + i \delta$. 
Then, under \ras{terminal}, the optimal cost satisfies
\begin{equation}\label{stability}
J^* _1 (x(t_k)) -  J^* _i (x(t_{k-1})) \leq - F (x(t_{k-1}), u^* _i (t_{k-1}), i\delta)
\end{equation}
\end{mylem}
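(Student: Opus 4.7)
The plan is to use the standard MPC warm-start construction: at $t_k$ build a candidate control sequence for the $1^{\rm st}$ pattern by concatenating the tail of the optimal sequence obtained at $t_{k-1}$ under the $i$-th pattern with $i$ additional steps generated by the local terminal controller $\kappa(x) = Kx$ of \ras{terminal}. Since $t_k = t_{k-1} + i\delta$, the $1^{\rm st}$-pattern sampling grid $\{t_k, t_k+\delta, \ldots, t_k+(N_p-1)\delta\}$ lines up with $\{t_{k-1}+i\delta, t_{k-1}+(i+1)\delta, \ldots, t_{k-1}+(N_p+i-1)\delta\}$, so I set
\begin{equation*}
\bar{u}_1(t_k + j\delta) = u^*_i(t_{k-1} + (i+j)\delta), \quad j = 0, 1, \ldots, N_p - i - 1,
\end{equation*}
and $\bar{u}_1(t_k + j\delta) = K \bar{x}_1(t_k + j\delta)$ for $j = N_p - i, \ldots, N_p - 1$, with $\bar{x}_1(t_k) = x(t_k)$ propagated via \req{constraint2}. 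By determinism of \req{sys1} and the grid alignment, the first $N_p - i$ candidate states coincide with the tail $x^*_i(t_{k-1}+i\delta), \ldots, x^*_i(t_{k-1}+N_p\delta)$ of the previous optimal trajectory.

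Feasibility of $\bar{\bf u}_1(t_k)$ under the $1^{\rm st}$ pattern reduces to two checks. For indices $j = 0, \ldots, N_p-i-1$, constraints \req{constraint2}--\req{constraint3} hold because the candidate inherits them from the previous optimal solution, and in particular $\bar{x}_1(t_k + (N_p-i)\delta) = x^*_i(t_{k-1} + N_p\delta) \in \Phi$ by \req{constraint4} applied at $t_{k-1}$. For the remaining $i$ steps I invoke positive invariance of $\Phi$ under $\kappa$, which follows from \req{localcontroller} together with $P_f \succ 0$; this yields $\bar{x}_1(t_k+j\delta) \in \Phi$ and $K\bar{x}_1(t_k+j\delta) \in {\cal U}$ for $j = N_p-i, \ldots, N_p$, so \req{constraint3} and the terminal inclusion \req{constraint4} both hold.

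For the cost bound I decompose
\begin{equation*}
J_1(x(t_k), \bar{\bf u}_1(t_k)) = \sum_{j=0}^{N_p-i-1} F(\bar{x}_1(t_k+j\delta), \bar{u}_1(t_k+j\delta), \delta) + \sum_{j=N_p-i}^{N_p-1} F(\bar{x}_1(t_k+j\delta), K\bar{x}_1(t_k+j\delta), \delta) + \bar{x}_1(t_k+N_p\delta)^\mathsf{T} P_f \bar{x}_1(t_k+N_p\delta).
\end{equation*}
The first sum is identical to $\sum_{n=i}^{N_p-1} F(x^*_i(t_{k-1}+n\delta), u^*_i(t_{k-1}+n\delta), \delta)$. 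Iterating \req{localcontroller} exactly $i$ times starting from $\bar{x}_1(t_k + (N_p-i)\delta) = x^*_i(t_{k-1}+N_p\delta)$ and telescoping bounds the remaining two terms by $x^*_i(t_{k-1}+N_p\delta)^\mathsf{T} P_f x^*_i(t_{k-1}+N_p\delta)$. Comparing the resulting sum against the formula for $J^*_i(x(t_{k-1}))$ yields $J_1(x(t_k), \bar{\bf u}_1(t_k)) \leq J^*_i(x(t_{k-1})) - F(x(t_{k-1}), u^*_i(t_{k-1}), i\delta)$, and optimality $J^*_1(x(t_k)) \leq J_1(x(t_k), \bar{\bf u}_1(t_k))$ finishes the proof.

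The principal obstacle is the telescoping step: \ras{terminal} supplies descent only across a single sampling interval $\delta$, whereas here it must be chained $i$ times while the state remains in $\Phi$ so that $K\bar{x}_1 \in {\cal U}$ stays enforceable. Establishing that \req{localcontroller} together with $P_f \succ 0$ forces positive invariance of $\Phi$ under $\kappa$ is therefore the small but essential auxiliary fact underpinning the whole argument; once it is in hand, the rest is bookkeeping against the cost decomposition inherited from \rlem{lem1}.
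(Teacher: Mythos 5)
Your proposal is correct and follows essentially the same route as the paper's own proof: the shifted tail of $\mathbf{u}^*_i(t_{k-1})$ concatenated with $i$ steps of the terminal controller $\kappa$, feasibility from $x^*_i(t_{k-1}+N_p\delta)\in\Phi$, and an $i$-fold telescoping of \req{localcontroller} to absorb the appended stage costs into the terminal penalty. The only point you make explicit that the paper leaves implicit is the positive invariance of $\Phi$ under $\kappa$ (which indeed follows from \req{localcontroller} and $F\geq 0$), and your identification of this as the load-bearing auxiliary fact is accurate.
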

\begin{proof}
(Sketch) Let 
\begin{equation*}
\begin{aligned}
{\bf u}^* _{i} (t_{k-1}) &= \{ {u}^* _i(t_{k-1}), {u}^* _i(t_k) , \cdots, {u}^* _i (t_k+(N_p -i-1)\delta) \} \\
{\bf x}^* _{i} (t_{k-1}) &= \{ {x}^* _i(t_{k-1}), {x}^* _i(t_k) , \cdots, {x}^* _i (t_k+(N_p -i)\delta) \}
\end{aligned}
\end{equation*}
be the optimal control input and the corresponding state sequence obtained at $t_{k-1}$ under the $i$-th pattern. From the constraint \req{constraint4}, we have ${x}^* _i (t_k+(N_p -i)\delta ) \in \Phi$. At $t_k$, we consider the following control and the corresponding state sequence for the 1$^{\rm st}$ pattern; ${\bf \bar{u}} _{1} (t_{k}) = \{ \bar{u} _1(t_{k}), \bar{u} _1(t_k+\delta) , \cdots, \bar{u} _1 (t_k+(N_p -1)\delta) \}$, ${\bf \bar{x}} _{1} (t_{k}) = \{ \bar{x} _1(t_{k}), \bar{x} _1(t_k+\delta) , \cdots, \bar{x} _1 (t_k+N_p\delta) \}$, where each component of ${\bf \bar{u}} _{1} (t_{k})$ is given by
\begin{equation}\label{utilde}
\begin{array}{lll}
\bar{u}_1 (t_k +j \delta) = 
\left \{
\begin{array}{l}
{u}^* _i(t_k +j\delta)\\
\ \ \ ({\rm for}\ j=0,\cdots, N_p -i-1 ) \\
\kappa (\bar{x}_1(t_k +j\delta))\\
\ \ \ ({\rm for}\ j=N-i , \cdots, N_p -1)
\end{array}
\right. 
\end{array}
\end{equation}
Applying the local controller $\kappa$ from $t_k +(N_p -i)\delta$ is admissible since we have $\bar{x}_1 (t_k+(N_p-i)\delta) = {x}^* _i (t_k+(N_p-i)\delta) \in \Phi$. Thus ${\bf \bar{u}} _{1} (t_{k})$ is a feasible controller for Problem 1 under the 1$^{\rm st}$ sampling pattern, 
and the upper bound of the difference between $J^* _1 (x(t_{k}))$ and $J^* _i (x(t_{k-1}))$ is given by
\begin{equation}\label{costdif}
\begin{aligned}
J^* _1 (x(t_{k})) - J^* _i (x(t_{k-1})) &\leq J_1 (x(t_{k}), {\bf \bar{u}} _{1} (t_{k})) \\
                                                &\ \  - J_i (x(t_{k-1}), {\bf u}^* _{i} (t_{k-1}))
\end{aligned}
\end{equation}
Some calculations of the right hand side in \req{costdif} yield \req{stability}. 
The derivation of \req{stability} from \req{costdif} is given in the Appendix. 
\end{proof}
\section{Self-triggered strategy}
In this section we propose the self-triggered strategy as one of our main results. The key idea of the framework
is to select the {best} pattern in the sense that it provides the largest possible transmission time interval, while satisfying some conditions to obtain the desired control performance. In the following proposed algorithm, we denote $i_k$, $k\in \inte$ as the sampling pattern selected by the controller to transmit the corresponding optimal control sample $u^* _{i_k} (t_k)$.\\ 

\noindent
{\bf \textit{Algorithm 1}}: {\bf {(Self-triggered MPC strategy)}}
\begin{enumerate}
\item \textit{Initialization :} At initial time $t_0$, the controller solves Problem 1 only for the 1$^{\rm st}$ sampling pattern based on $x(t_0)$. The controller then transmits the optimal control sample $u^* _1(t_0)$ to the plant, i.e., $i_0 = 1$. The plant applies the constant controller $u^* _1(t_0)$ until $t_1=t_0 +\delta$, and sends back $x(t_1)$ to the controller as a new state measurement. 
\item At $t_k$, $k\in \mathbb{N}_{\geq 1}$, the controller solves Problem 1 for all patterns $i=1,\cdots, M$ based on $x(t_k)$. This provides the optimal control sequences ${\bf u}^* _1 (t_k)$, ${\bf u}^* _2 (t_k)$, $\cdots$, ${\bf u}^* _M (t_k)$, and the corresponding optimal costs  $J^* _1 (x(t_k)), \cdots, J^* _M (x(t_k))$. 
\item The controller selects one pattern $i _k \in \{ 1, \cdots, M \} $ by solving the following problem; 
\begin{equation}\label{maxi}
i _{k} = \underset{i\in \{1,2, \cdots, M \} } {\rm max}\ i ,
\end{equation}
subject to 
\begin{equation}\label{feasibility1}
J^* _i(x(t_k)) \leq J^* _1 (x(t_k)) + \beta
\end{equation}
\begin{equation}\label{feasibility2}
\begin{aligned}
J^* _{i} (x(t_k)) \leq & J^* _{i _{k-1}} (x(t_{k-1})) \\
                          & - \gamma F (x(t_{k-1}), u^* _{i_{k-1}} (t_k), i_{k-1} ),
\end{aligned}
\end{equation}
where $\beta$ and $\gamma$ are the constant parameters, satisfying $0\leq \beta$, $0<\gamma \leq 1$. 
\item The controller transmits $u^* _{i_k} (t_k)$, and then the plant applies $u^* _{i_k} (t_k)$ as sample-and-hold implementation until $t_{k+1} = t_k+ i_k \delta$. The plant then sends back $x(t_{k+1})$ to the controller as a new current state measurement.
\end{enumerate}

The main point of our proposed algorithm is the way to select the pattern $i_k$ given in the step (iii). 
From \rlem{lem1}, the 1$^{\rm st}$ pattern provides the minimum cost among all sampling patterns. Thus, the first condition \req{feasibility1} implies that larger patterns are allowed to be selected to obtain longer transmission intervals, but the optimal cost should not go far from the 1$^{\rm st}$ pattern; the optimal cost is allowed to be larger only by $\beta$ from $J^* _1(x(t_k))$, so that it does not degrade much the control performance. 
Thus, the parameter $\beta$ plays a role to regulate the trade-off between the control performance and the transmission time intervals. That is, a smaller $\beta$ leads to better control performance (but resulting in less transmissions), and larger $\beta$ leads to less transmissions (but resulting in worse control performance). 

The second condition \req{feasibility2} takes into account the optimal cost obtained at the previous time $t_{k-1}$, and this aims at guaranteeing the asymptotic stability of the origin. Note that $\gamma$ needs to satisfy $0<\gamma \leq 1$. As we will describe in the next section, this condition ensures that Algorithm 1 is always implementable. 
Since it is desirable to reduce the communication load as much as possible, the controller selects the pattern providing the largest transmission interval satisfying \req{feasibility1}, \req{feasibility2}, i.e., max $i$ in \req{maxi}.

The main advantage of using the proposed method is that the optimal cost $J^* _i (t_k)$ can be compared not only with the previous one $J^* _{i_{k-1}} (t_{k-1})$, but also with the current ones obtained at $t_k$ under different sampling patterns. This allows us not only to ensure stability, but also to evaluate how much the control performance becomes better or worse according to the transmission time intervals. Note that the control performance may also be regulated through the tuning of $\gamma$ in \req{feasibility2}. 
However, due to the condition $0<\gamma \leq 1$, we cannot select $\gamma$ large enough such that small patterns (good control performance) are ensured to be obtained. 
Thus the desired control performance can be suitably specified through the first condition \req{feasibility1}, rather than \req{feasibility2}. 

Note also that in contrast to our preliminary work in \cite{hashimoto2017a}, 
Algorithm 1 does not involve parameters such as Lipschitz constants for the stage and the terminal cost. 
Since these parameters involve the maximum distance of the state from the origin, i.e., ${\rm sup}_{t\in[0, \infty)} \{ ||x(t)|| \}$ (see e.g., \textit{Lemma 3.2} in \cite{zhu}), they may need to be over-approximated and the self-triggered condition may then become conservative. We will also illustrate through a simulation example that the proposed method attains a less conservative result than the previous approach. 

\begin{myrem}[{Effect of time delays}]
\normalfont
The main drawback of Algorithm 1 is the requirement of solving multiple OCPs at the same time, which clearly induces a time-delay of transmitting control samples in practical implementations. Regarding time delays, several methods have been proposed to take them into account and can also be applied to our proposed self-triggered strategy. For example, a delay compensation strategy has been proposed in \cite{findeisen2009a}.  
When applying this approach, the maximum total time delay $\bar{\tau}_d$ needs to be upper bounded to satisfy $\delta^{(i)} _1 < T_p - \bar{\tau}_d$ in order to guarantee stability. This implies that the condition $i < (T_p - \bar{\tau}_d)/\delta$ is required in addition to the conditions \req{feasibility1}, \req{feasibility2} as the rule to choose the sampling pattern.
\end{myrem}
\begin{myrem}[Effect of the noise or model uncertainties]
\normalfont
In the above formulation, we have not considered any effects of model uncertainties or disturbances. However, the proposed scheme can be extended to take into account these effects by slightly modifying \rlem{lem2}. 
Suppose that the actual state is given by $\dot{x} = A x + Bu + w$, where $w$ denotes additive uncertainties or disturbances satisfying $||w||\leq w_{\rm max}$. 
By utilizing Theorem 2 in \cite{camacho2002a}, we can show that there exists a positive $L_v$ such that $J^* _1 (x(t_k)) -  J^* _i (x(t_{k-1})) \leq - F (x(t_{k-1}), u^* _i (t_{k-1}), i\delta) + L_v w_{\rm max}$ instead of \req{stability}. Therefore, assuming that $w_{\rm max}$ is known, the corresponding self-triggered strategy is obtained by adding $L_v w_{\rm max}$ to the right hand side of \req{feasibility2}. 
Note that the first condition \req{feasibility1} does not need to be modified, since \rlem{lem1} still holds even for the disturbance case. 
\end{myrem}

\section{Analysis}
One of the desirable properties of Algorithm 1 is to ensure that it is always implementable, i.e., we need to exclude the case when all the patterns do not satisfy both \req{feasibility1} and \req{feasibility2}. Furthermore, the stability of the closed loop system under Algorithm 1 needs to be verified.
In the following theorem, we deduce that both of these properties are satisfied. 
\begin{mythm}
Consider the networked control system in \rfig{network} where the plant follows the dynamics given by \req{sys1} and the proposed self-triggered strategy (Algorithm 1) is implemented.  
The followings are then satisfied: 
\begin{enumerate}
\item The way to obtain the pattern $i_k$ in step (iii) in Algorithm~1, is always feasible. 
That is, there exists at least one pattern ${i}$, satisfying both \req{feasibility1}, \req{feasibility2} for all $k\in \mathbb{N}_{\geq 0}$. 
\item The closed loop system is asymptotically stabilized to the origin. 
\end{enumerate}
\end{mythm}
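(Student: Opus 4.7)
The plan is to establish (i) by exhibiting $i=1$ as a choice that is always available in step (iii), and (ii) by using the running optimal cost as a Lyapunov-like function along the sampling sequence.

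For the feasibility claim, I would first verify that Problem 1 under the 1$^{\rm st}$ sampling pattern is itself feasible at every $t_k$. Starting from feasibility at $t_0$, I would inductively construct a feasible candidate at $t_k$ by shifting the previously optimal sequence ${\bf u}^*_{i_{k-1}}(t_{k-1})$ forward by $i_{k-1}\delta$ and appending the local feedback $\kappa(x)=Kx$ on the remaining tail of length $i_{k-1}$; this is the same construction used inside the proof sketch of \rlem{lem2}. The terminal constraint \req{constraint4} then holds because the starting point of the tail is $x^*_{i_{k-1}}(t_{k-1}+N_p\delta)\in\Phi$, and \ras{terminal} (together with $F\geq 0$) renders $\Phi$ positively invariant under $\kappa$ while keeping $Kx\in\mathcal{U}$. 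With feasibility in hand, I would then check that $i=1$ fulfils both step-(iii) conditions: \req{feasibility1} is immediate since $\beta\geq 0$, and \req{feasibility2} follows by combining \rlem{lem2}, which gives $J^*_1(x(t_k)) - J^*_{i_{k-1}}(x(t_{k-1})) \leq -F(x(t_{k-1}), u^*_{i_{k-1}}(t_{k-1}), i_{k-1}\delta)$, with the bounds $\gamma\leq 1$ and $F\geq 0$. Hence the $\max$ in \req{maxi} is always taken over a non-empty set.

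For asymptotic stability I would set $V_k := J^*_{i_k}(x(t_k))$. Inequality \req{feasibility2} at the accepted pattern $i_k$ reads $V_k \leq V_{k-1} - \gamma F(x(t_{k-1}), u^*_{i_{k-1}}(t_{k-1}), i_{k-1}\delta)$, and since $V_k\geq 0$, telescoping yields $\sum_{k\geq 1} F(x(t_{k-1}), u^*_{i_{k-1}}(t_{k-1}), i_{k-1}\delta) \leq V_0/\gamma<\infty$, so the summands vanish along the sequence. The crucial reduction is then $F\to 0 \Rightarrow \tilde{x}(t_k)\to 0$, which I would obtain in two steps. First, the monotonicity $\Gamma(i\delta)\succeq \Gamma(\delta)$ for $i\geq 1$, because $\Gamma(i\delta)-\Gamma(\delta)$ is the integral of the PSD kernel defining $\Gamma$. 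Second, $\Gamma(\delta)\succ 0$, read off from the identity $\tilde{x}^\mathsf{T}\Gamma(\delta)\tilde{x} = \int_0^\delta \|Q^{1/2}(A_s x + B_s u)\|^2\,ds + \delta\,u^\mathsf{T} R u$: since $R\succ 0$ the second summand enforces $u=0$, and then $A_0=I$ together with $Q\succ 0$ force $x=0$. Combining the two gives $\lambda_{\min}(\Gamma(\delta))(\|x(t_k)\|^2+\|u^*_{i_k}(t_k)\|^2) \leq F$, so both $x(t_k)\to 0$ and $u^*_{i_k}(t_k)\to 0$. On each inter-sample window $[t_k,t_{k+1}]$ of length at most $M\delta$, the linear solution formula bounds $\|x(t_k+s)\|$ by a finite multiple of $\|x(t_k)\|+\|u^*_{i_k}(t_k)\|$, so the convergence lifts from the sampling grid to all $t$.

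The main obstacle, conceptually, is the pattern-independent reduction $F\to 0\Rightarrow \tilde{x}(t_k)\to 0$: a priori one might worry that the choice of $i_k$ along the sequence makes $F$ a moving target, but the monotonicity $\Gamma(i\delta)\succeq\Gamma(\delta)$ collapses the issue into a uniform lower bound on $F$ in terms of $\tilde{x}(t_k)$ alone. A secondary but unavoidable technical point is the strict positive definiteness of $\Gamma(\delta)$, which rests on a short observability-style argument exploiting $Q,R\succ 0$ and $A_0=I$; this is the only place where the continuous-time structure of $A_s,B_s$ enters in a nontrivial way.
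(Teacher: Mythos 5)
Your proof of part (i) matches the paper's: both exhibit $i=1$ as always admissible, with \req{feasibility1} trivial from $\beta\geq 0$ and \req{feasibility2} following from \rlem{lem2} together with $0<\gamma\leq 1$ and $F\geq 0$ (and the recursive-feasibility construction you describe is exactly the shifted-plus-local-controller candidate already built inside the proof of \rlem{lem2}). For part (ii), however, you take a genuinely different route. The paper lower-bounds each decrement by $\gamma F(x(t_{k-1}),u^*_{i_{k-1}}(t_{k-1}),i_{k-1}\delta) > \gamma\int_{t_{k-1}}^{t_k} x^\mathsf{T}(t)Qx(t)\,{\rm d}t$, telescopes to conclude $\int_{t_0}^{\infty}x^\mathsf{T}Qx\,{\rm d}t<\infty$, and invokes Barbalat's lemma; this is short but silently relies on uniform continuity of $x^\mathsf{T}(t)Qx(t)$ on $[0,\infty)$, i.e., on boundedness of $x$ and $\dot x$, which the paper asserts without justification. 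You instead stay on the sampling grid: summability of the decrements forces $F\to 0$, the monotonicity $\Gamma(i\delta)\succeq\Gamma(\delta)$ makes the bound uniform over the (time-varying) selected patterns, and the strict positivity $\Gamma(\delta)\succ 0$ — your short observability-style argument using $R\succ 0$, $Q\succ 0$ and $A_0=I$ is correct — yields $x(t_k)\to 0$ and $u^*_{i_k}(t_k)\to 0$; the lift to continuous time is then immediate because the inter-sample evolution is the explicit linear flow $x(t_k+s)=A_s x(t_k)+B_s u^*_{i_k}(t_k)$ with $s\leq M\delta$. Your version avoids Barbalat and the unproven uniform-continuity step at the cost of two small extra lemmas on $\Gamma$, and it delivers convergence of the transmitted control samples as a by-product; both arguments are valid.
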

\begin{proof}
The proof of (i) is obtained by showing that the 1$^{\rm st}$ sampling pattern ($i=1$) always satisfies \req{feasibility1} and \req{feasibility2}. 
The first condition is clearly satisfied when $i=1$ since $\beta \geq 0$. Furthermore, from \rlem{lem2}, we obtain
\begin{equation*}
\begin{aligned}
 J^* _{1} (x(t_k))  & \leq  J^* _{i _{k-1}} (x(t_{k-1})) - F (x(t_{k-1}), u^* _{i_{k-1}} (t_{k-1}), i_{k-1} ) \\ 
                       & \leq J^* _{i _{k-1}} (x(t_{k-1})) -\gamma F (x(t_{k-1}), u^* _{i_{k-1}} (t_{k-1}) , i_{k-1}) 
\end{aligned}
\end{equation*}
Thus the second condition holds for $i=1$. Thus the proof of (i) is complete. 

The proof of (ii) is obtained by the fact that the optimal cost decreases along the time sequence. 
Since the optimal cost of the selected pattern satisfies \req{feasibility2}, we have 
\begin{equation*}
\begin{aligned}
J^* _{i_1} (x(t_1)) - J^* _{i_0} (x(t_{0}))  &\leq - \gamma F (x(t_{0}), u^* _{i_{0}} (t_{0}) , i_{0}) \\
                                                   &< - \gamma \int^{t_1} _{t_0} x^\mathsf{T}(t) Q x(t) {\rm d}t \\
J^* _{i_2} (x(t_2)) - J^* _{i _{1}} (x(t_{1}))  &\leq - \gamma F (x(t_{1}), u^* _{i_{1}} (t_{1}) , i_{1}) \\
                                                     &< - \gamma \int^{t_2} _{t_1} x^\mathsf{T}(t) Q x(t) {\rm d}t\\
                                                     & \vdots   
\end{aligned}
\end{equation*}
where the derivation from the first to the second in-equality follows from the definition of the stage cost $F$ given by \req{costF}.
Summing over both sides of the above yields 
\begin{equation*}
\begin{aligned}
\gamma \int^{\infty} _{t_0} x^\mathsf{T}(t) Q x(t) {\rm d}t < {J^* _{i_0} (x(t_{0})) - J^* _{i_\infty} (x(\infty))} < \infty
\end{aligned}
\end{equation*}
Since the function $x^\mathsf{T}(t) Q x(t)$ is uniformly continuous on $t\in [0, \infty )$ and $Q\succ 0$, 
we obtain $||{x} (t)|| \rightarrow 0$ as $t\rightarrow \infty$ from Barbalat's lemma \cite{khalil}. This completes the proof. 
\end{proof}

\section{Illustrative example}
As an illustrative example, we consider the spring-mass system; the state vector $x=[x_1; x_2]$ consists of the position $x_1$ and the velocity $x_2$, and the dynamics are given by 
\begin{equation}
\dot{x}  = \left [
\begin{array}{cc}
0 &  1 \\
-k/m & 0  \\
\end{array}
\right ]x + \left [
\begin{array}{c}
0 \\
1/m \\
\end{array}
\right ] u, 
\end{equation}
where $k=2$ is the spring coefficient and $m=1$ is the mass. 
The matrices for the stage cost are $Q=I_2$, $R=0.5$, and the prediction horizon is $T_p = 8$. 
The terminal matrix $P_f$ and the local controller $\kappa$ are computed properly by following the procedure presented in \cite{Chen1998a}.
We further assume that the control input $u$ is constrained by $||u|| \leq 8$. 
The total number of sampling patterns is given by $M=30$ with $\delta = 0.1$, i.e., the maximum transmission time interval is 
$M\delta = 3$. 
\begin{figure}[tbp]	
   \centering
   \includegraphics[width=8.0cm]{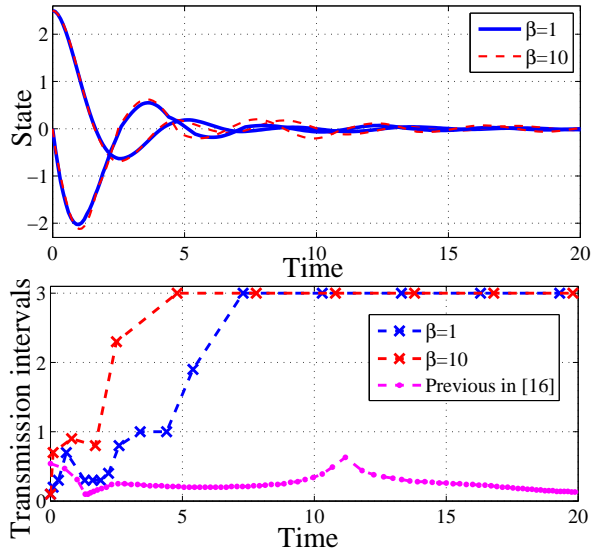}
   \caption{State trajectories and transmission time intervals.}
   \label{state_patterns}
\end{figure}

\rfig{state_patterns} shows state trajectories of $x_1$ and $x_2$ (upper), with $\gamma = 0.5$, $\beta = 1$ and $\beta=10$ from the initial state $x_0 = [2.5;\ 0]$, and the transmission time intervals (lower). 
From the figure, the state achieves asymptotic stability of the origin, and larger patterns (i.e., longer transmission time intervals) are more likely to be obtained as the state gets closer to the origin. One can also see the trade-off between the control performance and the number of transmissions; faster convergence is achieved when $\beta = 1$ than $\beta=10$ from the upper figure, while it requires more transmissions of control samples as shown in the lower figure.

To compare with the previous framework, 
we have also plotted the transmission time intervals in \rfig{state_patterns} obtained by the methodology presented in \cite{hashimoto2017a}. Here we set $\sigma=\gamma=0.5$ in {Eq.(19)} in \cite{hashimoto2017a}, 
to ensure the same rate of cost decrease. 
From \rfig{state_patterns}, the proposed scheme attains much longer transmission time intervals than the previous method under the same performance guarantees. 
\section{Conclusion and Future work}
In this paper, we propose a self-triggered control methodology for continuous-time linear networked control systems. 
Our proposed scheme was derived by solving multiple optimal control problems with different sampling time intervals, and the controller selects one sampling pattern resulting in the largest transmission time intervals while satisfying the desired control performances. Our proposed scheme was also validated by an illustrative example. Future work involves deriving the self-triggered strategies against random packet dropouts and extend the proposed result to the nonlinear case. 

\appendix
{(Derivation of \req{stability} from \req{costdif})}:
The optimal cost for the $i$-th pattern at $t_{k-1}$ is given by 
\begin{equation*}
\begin{aligned}
J_i (x(t_{k-1}), {\bf u}^*_i & (t_{k-1}) ) = F (x(t_{k-1}), u^* _i (t_{k-1}), i \delta) \\
& + F (x^* _i (t_{k}), u^* _i (t_{k}), \delta) \\
& + {\displaystyle \sum^{N_p-i-1} _{n=1}} F ( x^* _i (t_k+ n \delta), u^* _i(t_k+ n \delta), \delta)  \\
& + {x^* _i}^\mathsf{T} (t_k + (N_p-i) \delta) P_f x^* _i (t_k + (N_p-i) \delta).
\end{aligned}
\end{equation*}
Furthermore, the cost at $t_k$ under the 1$^{\rm st}$ sampling pattern with 
${\bf \bar{u}}_1 (t_k)$ in \req{utilde}, is given by 
\begin{equation*}
\begin{aligned}
J_1 (x(t_k), {\bf \bar{u}}_1 (t_k) ) & = F (x(t_k), \bar{u}_1 (t_k), \delta) \\
& + {\displaystyle \sum^{N_p -1} _{n=1}} F ( \bar{x}_1(t_k+ n \delta), \bar{u}_1(t_k+ n \delta), \delta) \\
& + {\bar{x}}^\mathsf{T} _1(t_k + N_p \delta) P_f \bar{x}_1(t_k + N_p \delta).
\end{aligned}
\end{equation*}
From \req{utilde}, we have $\bar{u}_1 (t_k +j \delta) = {u}^* _i (t_k +j \delta)$ for $j=0,\cdots, N_p -i-1$, and thus 
$\bar{x}_1 (t_k +j \delta) = {x}^* _i (t_k +j \delta)$ for $j=0,\cdots, N_p -i$. 
The difference between $J_1 (x(t_{k}), {\bf \bar{u}} _{1} (t_{k}))$ and $J_i (x(t_{k-1}), {\bf u}^* _{i} (t_{k-1}))$ is then given by 
\begin{equation*}
\begin{aligned}
J_1 (x(t_{k}), {\bf \bar{u}} _{1} & (t_{k})) - J_i (x(t_{k-1}), {\bf u}^* _{i} (t_{k-1})) \\
                                                     & =  - F (x(t_{k-1}), u^* _i(t_{k-1}), i \delta) \\
                                                      & + {\displaystyle \sum^{N_p-1} _{n=N_p-i }} F ( \bar{x} (t_{k}+ n \delta), \kappa (\bar{x} (t_{k}+ n \delta), \delta) \\ 
&- {\bar{x}}^\mathsf{T} _1 (t_{k} + (N_p-i) \delta) P_f \bar{x} _1 (t_{k} + (N_p-i) \delta) \\ 
&+ {\bar{x} }^\mathsf{T} _1 (t_{k} + N_p\delta) P_f \bar{x}_1 (t_{k} + N_p\delta) 
\end{aligned}
\end{equation*}
From \req{localcontroller}, we have 
${\bar{x} }^\mathsf{T} _1 (t_{k} + N_p\delta) P_f \bar{x}_1 (t_{k} + N_p\delta)
- {\bar{x} }^\mathsf{T} _1 (t_{k} + (N_p-1) \delta) P_f \bar{x}_1 (t_{k} + (N_p-1) \delta)
\leq - F( \bar{x}_1 (t_{k}+ (N_p-1) \delta), \kappa (\cdot ) , \delta)$.
By using this inequality, we obtain
\begin{equation*}
\begin{aligned}
J_1 (x(t_{k}), {\bf \bar{u}} _{1} (t_{k}&))- J_i (x(t_{k-1}), {\bf u}^* _{i} (t_{k-1})) \\
&\leq - F (x(t_{k-1}), u^* _i(t_{k-1}), i\delta) \\ 
&  + {\displaystyle \sum^{N_p-2} _{n=N_p-i }} F( \bar{x}_1 (t_{k}+ n \delta), \kappa (\bar{x}_1 (t_{k}+ n \delta), \delta)\\ 
& - {\bar{x}}^\mathsf{T} _1 (t_{k} + (N_p-i) \delta) P_f \bar{x}_1 (t_{k} + (N_p -i) \delta) \\
& +  {\bar{x} }^\mathsf{T} _1 (t_{k} + (N_p-1) \delta ) P_f \bar{x}_1 (t_{k} + (N_p-1) \delta ) \\
\end{aligned}
\end{equation*}
Similarly above, by recursively using the inequality from \req{localcontroller};
\begin{equation*}
\begin{aligned}
{\bar{x} }^\mathsf{T} _1 & (t_{k} + (N_p-j) \delta) P_f \bar{x}_1 (t_{k} + (N_p-j-1)\delta) \\
                                & - {\bar{x} }^\mathsf{T} _1 (t_{k} + (N_p-j-1) \delta) P_f \bar{x}_1 (t_{k} + (N_p-j-1) \delta) \\
                                & \leq - F( \bar{x}_1 (t_{k}+ (N_p-j-1) \delta), \kappa (\cdot ) , \delta)
\end{aligned}
\end{equation*}
for $j\in \{1, 2, \cdots, i-1\}$, we obtain
\begin{equation*}
\begin{aligned}
J_1 (x(t_{k}), {\bf \bar{u}} _{1} (t_{k})) & - J_i (x(t_{k-1}), {\bf u}^* _{i} (t_{k-1})) \\
 & \leq - F (x(t_{k-1}), u^* _i(t_{k-1}), i\delta),
\end{aligned}
\end{equation*}
and this yields \req{stability}.

\end{document}